\theoremstyle{plain}
\newtheorem{theorem}{Theorem}[section]
\newtheorem{corollary}[theorem]{Corollary}
\theoremstyle{definition}
\theoremstyle{remark} 
\newtheorem{remark}[theorem]{Remark}
\newcommand{\eps}{\varepsilon}
\newcommand{\lam}{\lambda}
\newcommand{\norm}[1]{\left\lVert {#1} \right\rVert}
\title{\LARGE \bf
	Convergence of the Value Function in Optimal Control Problems with Unknown Dynamics}
\date{}
\author{Andrea Pesare$^{1}$, Michele Palladino$^{2}$ and Maurizio Falcone$^{1}$
\thanks{ $^{1}$A. Pesare and M. Falcone are with the Deparment of Mathematics, Sapienza Universit\`a di Roma, 00185 Rome, Italy
       ({\tt\small e-mail: pesare@mat.uniroma1.it, falcone@mat.uniroma1.it})}%
\thanks{$^{2}$M. Palladino is with the Gran Sasso Science Institute - GSSI, 67100 L'Aquila, Italy
        ({\tt\small e-mail: michele.palladino@gssi.it})}%
}
\begin{document}

\maketitle

\begin{abstract}
We deal with the convergence of the value function of an approximate control problem with uncertain dynamics to the value function of a nonlinear optimal control problem. The assumptions on the dynamics and the costs are rather general and we assume to represent uncertainty in the dynamics by a probability distribution. The proposed framework aims to describe and motivate some model-based Reinforcement Learning algorithms where the model is probabilistic. We also show some numerical experiments which confirm the theoretical results.
\end{abstract}
\begin{IEEEkeywords}
	Reinforcement learning, optimal control, nonlinear systems, system identification, convergence.
\end{IEEEkeywords}

\section{Introduction}

Reinforcement Learning (RL) is an important branch of Machine Learning aiming to provide satisfactory policies that an agent can easily implement in an uncertain environment \cite{SuttonBarto}.  The agent acquires knowledge (\textit{learns}) on the environment from its past experience, which is usually represented by series of statistical data, but can also learn while interacting with the system. Optimal control \cite{Fleming-Rishel,Bardi-Capuzzo} and RL are strongly connected \cite{sutton1992,recht2019}, to the point that in \cite{sutton1992} the authors write \emph{``Reinforcement Learning is direct adaptive optimal control''}.

RL algorithms are generally classified in two categories: \textit{model-based} methods, which build a predictive model of the environment and use it to construct a controller, and \textit{model-free} methods, which directly learn a policy or a value function by interacting with the environment. Model-free algorithms have shown great performances \cite{Mnih2015,TRPO2015,Lillicrap2016,SAC2018}, although they are generally quite expensive to train, especially in terms of sample complexity; this often limits their applications to simulated domains. On the contrary, model-based techniques show a higher sample efficiency \cite{PILCO2013,BenchmarkMBRL2019}, which results in faster learning. Furthermore, recent algorithms have managed to limit the model-bias phenomenon by using \textit{probabilistic models}, which capture the uncertainty of the learned model \cite{PILCO2013,DeepPILCO,Kamthe2018}. In Bayesian Reinforcement Learning (BRL), the dynamics model is updated when new data are available \cite{BRL-Survey}.
Finally, the recently used probabilistic model ensembles \cite{PETS2018,MBPO2019} allowed model-based methods to achieve the same asymptotic performance as state-of-the-art model-free methods, with higher sample efficiency. Thanks to these features, model-based methods seem to be the most suitable for solving complex real-world problems.

In this paper, we consider the class of BRL algorithms for continuous state-action space and we analyze them from the viewpoint of control theory. In particular, we consider nonlinear control systems in which the dynamics is partially known and we assume that the belief on the dynamics that an agent has is represented by a probability distribution $\pi$ on a space of functions \cite{murray2018model}.
The task is thus written as an optimal control problem with averaged cost, a kind of formulation that has received a growing interest in the last few years (see e.g. \cite{Bettiol-Khalil,zuazua2014averaged,loheac2016}). In probabilistic model-based RL algorithms, this corresponds to the \textit{policy improvement} step, where the agent seeks to find the best control given a probabilistic model $\pi$, using a policy search method \cite{PILCO2013,DeepPILCO,MBPO2019} or MPC \cite{Kamthe2018,PETS2018} or other methods.
In the framework we propose, the precision with which $\pi$ describes the true dynamics improves as soon as the dataset becomes wider. This reflects a situation in BRL where the agent learns on the surrounding environment as far as he gets more experience and \textit{updates} the dynamics model.

The main objective of the paper concerns a convergence result of the value function $V_\pi$ of an averaged (with respect to a probability measure $\pi$)  optimal control problem to the ``true value function" $V$. Here, by true value function we mean the one defined by the optimal control governed by the \textit{true}, underlying dynamics. Roughly speaking, the main result of the paper can be stated as follows: \textit{the value function $V_\pi$ is close to $V$ as soon as $\pi$ provides an accurate representation of the true, underlying dynamics}. Similar results have been recently obtained for a general Linear Quadratic Regulator problem with finite horizon (see \cite{pesare2020LQR}). In the present paper, we will focus our attention on a general, nonlinear optimal control problem over a finite horizon, under globally Lipschitz assumptions on the costs and the controlled dynamics.

In the next section, we present the precise setting of the paper whereas in section~\ref{sec: results} we will state and prove the main result. Section~\ref{sec: numerics} is devoted to some numerical tests and section~\ref{sec: conclusions} provides the conclusions and an overview on open questions.

\section{Problem formulation}
This section aims to propose the nonlinear optimal control framework that we want to solve. Before that, let us introduce some notations which will be used throughout the paper. For vectors $v \in \mathbb{R}^n$, $|v|$ will denote the Euclidean norm.
For continuous functions $f \colon D \to \mathbb{R}$ with $D \subset \mathbb{R}^n$, the notations $\norm{f}_\infty$ and $\norm{f}_{\infty,K}$ will indicate the sup norm respectively over the function domain $D$ or over a compact set $K \subset D$. \\
Let $\pi$ and $\pi'$ be two probability distributions on a compact metric space $(X,d)$. The 1-Wasserstein distance (see \cite{Villani}) between them is defined as
\begin{equation}\label{wasserstein_distance}
	W_1(\pi,\pi') \coloneqq \inf_{\gamma \in \Gamma(\pi,\pi')}{\int_{X \times X} \norm{g-f}_\infty \, d\gamma(g,f)}  \ ,
\end{equation}
where $\Gamma(\pi,\pi')$ is the collection of all probability measures on $X \times X$ having $\pi$ and $\pi'$ as marginals, $f$ and $g$ are generic elements in $X$ and the symbol $d\gamma$ indicates that the integral is with respect to the measure $\gamma$.\\
Given a probability space $(\Omega,\mathcal{F},\pi)$  and a random variable $Y$ on $\Omega$, $\mathbb{E}_\pi[Y]$ denotes the expected value of $Y$ with respect to $\pi$.

\subsection{Problem A: a classical optimal control problem}\label{PA}
Let us consider a classical finite horizon optimal control problem (\cite{Fleming-Rishel,Bardi-Capuzzo}), which we will call \emph{Problem A}. For \mbox{$0\leq s< T$}, let us consider the controlled dynamics
\begin{equation}\label{TD}
	\left\{ 
	\begin{aligned}
		\dot{x}(t)&=f(x(t),u(t))  \qquad \qquad t \in [s,T] \\
		x(s)&=x_0 , 
	\end{aligned}
	\right.
\end{equation}
where the nonlinear dynamics $f:\mathbb{R}^n\times \mathbb{R}^m\rightarrow \mathbb{R}^n$ is continuous in the pair $(x,u)$ and Lipschitz continuous with respect to $x$, uniformly with respect to $u$. Those conditions guarantee that the Cauchy problem \eqref{TD} is well-posed, in the sense that for every measurable control $u$ and initial condition $x(s)=x_0\in \mathbb{R}^n$, there exists a unique solution of \eqref{TD}.

The goal is to minimize the cost functional
\begin{equation}\label{cost-A} J_{s, x_0}[u]\coloneqq \int_s^T \ell (x(t),u(t)) dt + h(x(T))  \ , \end{equation}
over the class of the \emph{admissible controls} $\mathcal{U}_s=\{u:[s,T]\rightarrow U, \ \textrm{measurable} \}$, where $U$ is a closed subset of $\mathbb{R}^m$ and $\ell$ and $h$ are respectively the running cost and the terminal cost, which we require to be Lipschitz continuous with respect to $x$. For each $(s, x_0)\in[0,T]\times \mathbb{R}^n$, the \emph{value function}  and the corresponding {\em optimal control} associated to the optimal control problem \eqref{TD}-\eqref{cost-A} are respectively defined as
\[ V(s, x_0) \coloneqq \inf\limits_{u \in \mathcal{U}_s} J_{s,x_0}[u] \quad \hbox{and} \quad u^*(s,x_0) \coloneqq {\rm arg}\min\limits_{u \in \mathcal{U}_s} J_{s, x_0}[u] \, . \]
\subsection{Problem B: an optimal control problem with uncertain dynamics}\label{PB}
Let us now introduce another control problem in which the real  dynamics $f$ is \textit{unknown}, meaning that one has merely a partial knowledge on $f$. Such a model uncertainty is captured by a probability distribution on a space of functions $X$  (which $f$ belongs to). More precisely, $X$ is a compact subset of $C^0(\mathbb{R}^n\times U; \mathbb{R}^n)$ with respect to the $||\cdot||_{\infty}$ norm (the Arzel\`a-Ascoli Theorem provides necessary and sufficient conditions for the set $X$ being compact). For $0\leq s< T$ and every $g \in X$, let us define the dynamical system
\begin{equation}\label{UD}
	\left\{ 
	\begin{aligned}
		\dot{x}^g(t)&=g\left( x^g(t),u(t) \right) \qquad \qquad t \in [s,T] \\
		x^g(s)&=x_0 .
	\end{aligned}
	\right.
\end{equation}
Given a probability distribution $\pi$ over $X$, one can define a cost functional for Problem B:
\begin{align}
	J_{\pi,s,x_0} [u] \coloneqq& \mathbb{E}_\pi \left[ \int_{s}^T \ell(x^g(t),u(t)) dt + h(x(T)) \right] \label{cost_B} \\
	=& \int_X \left[ \int_{s}^T \ell(x^g(t),u(t)) \, dt + h(x(T)) \right] \, d\pi(g) . \notag
\end{align}
Note that, even if there is a different trajectory $x^g(t)$  for each $g \in X$, the task concerns to look for a single control $u$ to be applied to every system dynamics $g \in X$. \\
The notions of value function and optimal control are analogous to those given for Problem A:
\begin{equation*}
	V_\pi(s, x_0) \coloneqq \inf_{u \in \mathcal{U}_s} J_{\pi,s,x_0}[u]  \ \hbox{ and } \ 
	u_\pi^*(s,x_0) \coloneqq {\rm arg}\min_{u \in \mathcal{U}_s} J_{\pi,s,x_0}[u] \, . 
\end{equation*}

\begin{remark}\label{rmk: parameters}
	It is worth pointing out that the theory developed here works both for non-parametric models (e.g. Gaussian processes \cite{PILCO2013,Kamthe2018}) and for parametric models (e.g. deep neural networks (\cite{DeepPILCO,PETS2018,MBPO2019})). In the latter case, we are assuming that the support of $\pi$ is a family of functions $\{f_\lambda\}_{\lambda \in \mathbb{R}^d}$ described by a parameter $\lambda \in \mathbb{R}^d$, with the dimension $d$ arbitrarily large. $\pi$ can thus be seen as a probability distribution on the parameter space $\mathbb{R}^d$ and is then easier to work with (see, for instance, the numerical tests in section~\ref{sec: numerics}).
\end{remark}

There are several model-based RL methods that rely on the design of probability distributions representing the belief that an agent has on the environment. In BRL algorithms \cite{PILCO2013,DeepPILCO,Kamthe2018,PETS2018,MBPO2019,BRL-Survey}, the probability distribution, which is built upon the data collected while exploring the partially unknown environment, is updated when new experience is gained. In this context, it is reasonable to expect that the newest probability distribution representing the environment will be more accurate than the initial one.

In the framework of our paper, $\pi$ is the probability distribution, i.e. the probabilistic model, which represents the knowledge that the agent has on the environment. Clearly, as the accuracy of $\pi$ increases, one should expect that the value function of Problem B is close (in a sense that will be made precise) to the value function of Problem A. In particular, we will  investigate the following questions:
\begin{enumerate}[label=(\Alph*)]
	\item How far is $V_\pi$ from $V$?
	\item If $\pi^N \to \delta_f$, then is it true that $V_{\pi^N} \to V$, where $\{ V_{\pi^N} \}_{N \in \mathbb{N}}$ are the value functions of a sequence of problems of type B, and $\{ \pi^N \}_{N \in \mathbb{N}}$ are the respective probability distributions on $X$?
\end{enumerate}

\section{Main Results}\label{sec: results}
In this section, we will state and prove the main result of the paper valid for a general family of probability distributions.
\begin{theorem}\label{General_thm}
	Let us consider two Problems of type B as described in section~\ref{PB}, one with distribution $\pi^N$ and the other with distribution $\pi^\infty$. We make the further assumptions:
	\begin{enumerate}[label=(\roman*)]
		\item[$(H1)$] There exists a constant $L_f>0$ such that
		\[ |f(x,u)-f(y,u)| \leq L_f |x-y| \]
		for each $f \in supp(\pi^\infty)$, $x,y \in \mathbb{R}^n$, $u \in U$;
		\item[$(H2)$] The two cost functions $\ell$ and $h$ are Lipschitz continuous in the first argument with constants respectively $L_\ell$ and $L_h$.
	\end{enumerate}
	Then the following estimate holds:
	\begin{equation}\label{Vgen_est}
		||V_{\pi^N} - V_{\pi^\infty}||_\infty \leq C(L_f, L_\ell, L_h, T) \, W_1(\pi^N,\pi^\infty) \ ,
	\end{equation}
	where $C(L_f, L_\ell, L_h, T)$ is a constant which depends only on the Lipschitz constants and on $T$, and $W_1(\pi^N,\pi^\infty)$ is the 1-Wasserstein distance defined in \eqref{wasserstein_distance}.
\end{theorem}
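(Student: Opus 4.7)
The plan is to bound the difference of value functions by the supremum over controls of the difference of cost functionals, then exploit the dual/coupling characterization of $W_1$ together with a Gronwall estimate on how trajectories depend on the underlying vector field.

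First I would use the elementary inequality $|\inf_u A(u) - \inf_u B(u)| \le \sup_u |A(u)-B(u)|$ to reduce the problem to showing that for every admissible control $u\in\mathcal{U}_s$
\[
\bigl| J_{\pi^N,s,x_0}[u] - J_{\pi^\infty,s,x_0}[u] \bigr| \le C\, W_1(\pi^N,\pi^\infty),
\]
with $C$ independent of $u$, $s$, $x_0$. Writing $\Phi_{s,x_0,u}(g) \coloneqq \int_s^T \ell(x^g(t),u(t))\,dt + h(x^g(T))$, the cost functional is $J_{\pi,s,x_0}[u]=\int_X \Phi_{s,x_0,u}(g)\,d\pi(g)$. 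The point is to prove that $\Phi_{s,x_0,u}$ is Lipschitz on $(X,\|\cdot\|_\infty)$ with a constant depending only on $L_f, L_\ell, L_h, T$ (and \emph{not} on $u$, $s$, $x_0$).

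To get that Lipschitz bound, fix $g_1,g_2\in X$ and set $y(t)=x^{g_1}(t)-x^{g_2}(t)$. Splitting
\[
\dot y(t) = \bigl[g_1(x^{g_1},u)-g_1(x^{g_2},u)\bigr] + \bigl[g_1(x^{g_2},u)-g_2(x^{g_2},u)\bigr]
\]
and using (H1) for $g_1\in\mathrm{supp}(\pi^\infty)$ gives $|\dot y(t)|\le L_f|y(t)| + \|g_1-g_2\|_\infty$, with $y(s)=0$. Gronwall's inequality then yields $|y(t)|\le \frac{e^{L_f(t-s)}-1}{L_f}\,\|g_1-g_2\|_\infty$ for $t\in[s,T]$. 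Feeding this into $\Phi$ and using (H2),
\[
|\Phi_{s,x_0,u}(g_1)-\Phi_{s,x_0,u}(g_2)| \le (L_\ell T + L_h)\,\frac{e^{L_f T}-1}{L_f}\,\|g_1-g_2\|_\infty,
\]
which is the desired Lipschitz estimate, uniform in $u$, $s$, $x_0$; this constant will be the $C$ in the theorem.

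Finally, for any coupling $\gamma\in\Gamma(\pi^N,\pi^\infty)$,
\[
J_{\pi^N,s,x_0}[u]-J_{\pi^\infty,s,x_0}[u] = \int_{X\times X}\bigl[\Phi_{s,x_0,u}(g)-\Phi_{s,x_0,u}(f)\bigr]\,d\gamma(g,f),
\]
so the Lipschitz bound gives $|J_{\pi^N,s,x_0}[u]-J_{\pi^\infty,s,x_0}[u]|\le C\int\|g-f\|_\infty d\gamma(g,f)$; taking the infimum over couplings produces $C\,W_1(\pi^N,\pi^\infty)$ by the very definition \eqref{wasserstein_distance}. Combined with the initial reduction, this proves \eqref{Vgen_est}. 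There is no real obstacle; the only mildly delicate point is checking that every constant appearing in the Gronwall step is uniform in $u$ and in the initial data $(s,x_0)$, since otherwise the supremum over $u$ in the first step would not close. Hypothesis (H1) (Lipschitz in $x$ uniformly in $u$) and (H2) are exactly what is needed for this uniformity.
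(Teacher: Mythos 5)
Your proof is correct and follows essentially the same route as the paper: a Gronwall comparison of $x^{g}$ and $x^{f}$ (splitting the difference of dynamics and using the Lipschitz constant of the element in $\mathrm{supp}(\pi^\infty)$), hypothesis (H2) to turn the trajectory bound into a cost bound integrated against a coupling of $\pi^N$ and $\pi^\infty$, and the reduction $|\inf_u J_{\pi^N}-\inf_u J_{\pi^\infty}|\le\sup_u|J_{\pi^N}-J_{\pi^\infty}|$, which the paper carries out via an $\varepsilon$-argument. The only cosmetic differences are your slightly sharper Gronwall constant $(e^{L_fT}-1)/L_f$ in place of the paper's $T e^{L_fT}$, and that you take the infimum over arbitrary couplings rather than invoking the existence of an optimal coupling $\gamma^*$.
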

\begin{proof} We divide the proof in three steps. \\
	\emph{STEP 1:} Fix two dynamics $g \in X$ and $f \in supp(\pi^\infty)$, an initial condition $x(s) = x_0$ with $s \in [0,T]$ and $x_0 \in \mathbb{R}^n$ and a control $u \in \mathcal{U}_s$. We estimate how far is $x^g(t)$ from $x^f(t)$, using Gronwall's Lemma, for each $t\in [s,T]$. \\
	Recall that $x^f(t)$ and $x^g(t)$ are solutions of the dynamical systems \eqref{UD}:
	\begin{equation*}
		\begin{aligned}
			x^f(t) &= x_0 + \int_s^t f(x^f(\tau),u(\tau)) \, d\tau \\
			x^g(t) &= x_0 + \int_s^t g(x^g(\tau),u(\tau)) \, d\tau
		\end{aligned}
	\end{equation*}
	Then we have the following estimate:
	\begin{equation*}
		\begin{aligned}
			|x^g(t)-x^f(t)| &\leq \int_s^t |g(x^g(\tau),u(\tau))-f(x^f(\tau),u(\tau))| \, d\tau \\
			&\leq \int_s^t |g(x^g(\tau),u(\tau))-f(x^g(\tau),u(\tau))| \, d\tau \\
			& \quad \ + \int_s^t |f(x^g(\tau),u(\tau))-f(x^f(\tau),u(\tau))| \, d\tau \\
			&\leq (t-s) \, \norm{f-g}_\infty + L_f \int_s^t |x^g(\tau)-x^f(\tau)| \, d\tau \, .
		\end{aligned}
	\end{equation*}
	Then, by Gronwall's Lemma, 
	\begin{equation}\label{x_est}
		|x^g(t)-x^f(t)| \leq (t-s) \, \norm{f-g}_\infty \, e^{L_f (t-s)}\leq t \, \norm{f-g}_\infty \, e^{L_f t} .
	\end{equation}
	\emph{STEP 2:} Fix an initial condition $x(s) = x_0$ with $s \in [0,T]$ and $x_0 \in \mathbb{R}^n$ and a control $u \in \mathcal{U}_s$. We estimate how far is $J_{\pi^N,s,x_0}[u]$ from $J_{\pi^\infty,s,x_0}[u]$. To enlighten the notation, we will write
	\[ J_{\pi^N}[u]=J_{\pi^N,s,x_0}[u] \quad \textrm{and} \quad J_{\pi^\infty}[u]=J_{\pi^\infty,s,x_0}[u] . \]
	For each $g \in X$ and $f \in supp(\pi^\infty)$ it holds
	\begin{equation} \begin{aligned} \label{ell_bound}
			| l{\left(x^g(t),u(t) \right)}- l{\left(x^f(t),u(t) \right))} &\leq L_\ell \, |x^g(t)-x^f(t)| \\
			& \leq L_\ell \, t \, \norm{f-g}_\infty \, e^{L_f t} \end{aligned} \end{equation}
	and
	\begin{equation} \label{h_bound} 
		| h{\left(x^g(T)\right)}- h{\left(x(T)\right)}| \leq L_h \, T \, \norm{f-g}_\infty \, e^{L_f T} . 
	\end{equation}
	As a property of $W_1$, there exists (see Theorem~4.1 on \cite{Villani}) a distribution $\gamma^*$ on $X \times X$ with marginal distributions $\pi^N$ and $\pi^\infty$ such that
	\[ W_1(\pi^N,\pi^\infty) = \int_{X \times X} \norm{g-f}_\infty \, d\gamma^*(g,f) . \]
	Let ${(g(x),f(x))}_{x \in \mathbb{R}^n}$ be a continuous process which has $\gamma^*$ as distribution. For each event $\omega \in \Omega$, we consider the realization of the process $(g^\omega,f^\omega)$. \\
	Let us sum up over the $g \in supp(\pi^N)$ and the $f \in supp(\pi^\infty)$:
	\begin{equation*} \begin{aligned}
			J_{\pi^N}[u] &- J_{\pi^\infty}[u] = \\ 
			& = \int_X \biggl[ \int_s^T \ell(x^g(t),u(t)) \, dt + h(x^g(T))\biggr] d\pi^N(g) \\
			& \quad - \int_X \biggl[ \int_s^T \ell(x^f(t),u(t)) dt + h(x^f(T)) \biggr] d\pi^\infty(f)  \\
			& = \int_{XxX} \biggl[ \int_s^T \bigl( \ell(x^g(t),u(t)) - \ell(x^f(t),u(t)) \bigr) \, dt \, \\
			& \qquad \qquad + \bigl( h(x^g(T)) - h(x^f(T)) \bigr) \biggr] d\gamma^*(g,f) .
	\end{aligned} \end{equation*}
	Passing to the absolute value and using the bounds in \eqref{ell_bound} and \eqref{h_bound}, we recover the expression of $W_1$:
	\begin{equation}\label{J_est} \begin{aligned} 
			\Bigl| J_{\pi^N}[u] &- J_{\pi^\infty}[u] \Bigr| \leq \\ 
			&\leq \left[ L_\ell \int_0^T t \, e^{L_ft} dt + L_h \, e^{L_f T} \right] \, \int_{X \times X} \norm{g-f}_\infty \, d\gamma(g,f) \\
			&= \left( L_\ell \ \frac{e^{L_f T} (L_f T - 1) + 1}{{L_f}^2} + L_h e^{L_f T} \right) \, W_1(\pi^N,\pi^\infty)  \\
			& = C(L_f,L_\ell,L_h,T) \, W_1(\pi^N,\pi^\infty) .
	\end{aligned} \end{equation}
	Note that this estimate does not depend on $x_0$ or $u$. \\
	\emph{STEP 3:} We now prove  the estimate \eqref{Vgen_est} using \eqref{J_est}. Fix an initial condition $x(s)=x_0$ and some $\eps > 0$. By the definition of $V_{\pi^\infty}$, there exists some control $u \in \mathcal{U}_s$ such that
	\[ J_{\pi^\infty}[u_\eps] \leq V_{\pi^\infty}(s, x_0) + \eps \, . \]
	Then one has
	\begin{equation*}
		\begin{aligned}
			V_{\pi^N}(s, x_0) - V_{\pi^\infty}(s, x_0) &= \inf_{u \in \mathcal{U}_s}{J_{\pi^N}[u]} - \inf_{u \in \mathcal{U}_s}{J_{\pi^\infty}[u]} \\
			& <\inf_{u \in \mathcal{U}_s}{J_{\pi^N}[u]} - J_{\pi^\infty}[u_\eps] + \eps  \\
			&\leq J_{\pi^N}[u_\eps]- J_{\pi^\infty}[u_\eps] + \eps \\
			&\leq \sup_{u \in \mathcal{U}_s}\left| J_{\pi^N} - J_{\pi^\infty} \right| + \eps .
		\end{aligned}
	\end{equation*}
	In the same way, we get 
	\[ V_{\pi^\infty}(s, x_0) - V_{\pi^N}(s, x_0)  <  \sup_{u \in \mathcal{U}_s}\left| J_{\pi^N} - J_{\pi^\infty} \right| + \eps \ , \]
	but the $\eps$ is arbitrary, so eventually
	\[ \left| V_{\pi^N}(s, x_0) - V_{\pi^\infty}(s, x_0) \right| \leq \sup_{u \in \mathcal{U}}\left| J_{\pi^N} - J_{\pi^\infty}  \right| \ . \]
	Finally, noting that the estimate is independent from $x_0$, we get the result:
	\begin{align*}
		|| V_{\pi^N} - V_{\pi^\infty} ||_\infty &\leq \sup_{u \in \mathcal{U}}\left| J_{\pi^N} - J_{\pi^\infty}  \right| \\
		&\leq C(L_f, L_\ell, L_h, T) \,W_1(\pi^N,\pi^\infty) .
	\end{align*}
\end{proof}
In the particular case where $\pi^\infty\equiv \delta_f$, $f$ being the true underlying dynamics, then one can express Theorem \ref{General_thm} in a more expressive fashion as follows:
\begin{corollary}[Convergence of the value functions]\label{cor:conv} Suppose that $f:\mathbb{R}^n\times U\rightarrow \mathbb{R}^n$ is a Lipschitz continuous function and the assumption $(H2)$ on $\ell$ and $h$ is satisfied.   Let $\{\pi^N\}_{N \in \mathbb{N}}$ be a sequence of probability distributions on \mbox{$X\subset C_0(\mathbb{R}^n\times U$)} compact set, with $\pi^N \!\! \xrightarrow{\: W_1 \: } \delta_f$. Then the value function $V_{\pi^N}$ of Problem B converges uniformly  on  $ [0,T]\times \mathbb{R}^n$ to the value function $V$ of Problem A.  
\end{corollary}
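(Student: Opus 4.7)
The corollary is a direct specialization of Theorem~\ref{General_thm} to the case $\pi^\infty = \delta_f$, so my plan is simply to verify the hypotheses of the theorem in this setting, identify $V_{\delta_f}$ with $V$, and then send $N \to \infty$ in the estimate \eqref{Vgen_est}.

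First, I would check that Theorem~\ref{General_thm} applies with $\pi^\infty = \delta_f$. Since $\operatorname{supp}(\delta_f) = \{f\}$, hypothesis $(H1)$ collapses to the Lipschitz condition on the single function $f$, which is precisely the standing assumption on the true dynamics in the statement of the corollary; one may take as $L_f$ the Lipschitz constant of $f$ itself. Hypothesis $(H2)$ is assumed outright. The theorem then yields
\begin{equation*}
\|V_{\pi^N} - V_{\delta_f}\|_\infty \leq C(L_f, L_\ell, L_h, T)\, W_1(\pi^N, \delta_f).
\end{equation*}

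Next, I would identify $V_{\delta_f}$ with $V$. When $\pi = \delta_f$, the expectation appearing in \eqref{cost_B} reduces to evaluation at $f$; the only trajectory $x^g$ with $g \in \operatorname{supp}(\delta_f)$ is the trajectory $x^f$ solving \eqref{UD}, which by uniqueness coincides with the solution of \eqref{TD}. Hence $J_{\delta_f, s, x_0}[u] = J_{s, x_0}[u]$ for every admissible $u$, and taking the infimum over $\mathcal{U}_s$ on both sides gives $V_{\delta_f}(s,x_0) = V(s,x_0)$ pointwise on $[0,T]\times\mathbb{R}^n$.

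Finally, I would conclude by letting $N \to \infty$ in the displayed estimate. The constant $C(L_f, L_\ell, L_h, T)$ is independent of $N$ and $W_1(\pi^N, \delta_f) \to 0$ by assumption, so the right-hand side vanishes and $V_{\pi^N} \to V$ uniformly on $[0,T]\times\mathbb{R}^n$. There is essentially no technical obstacle here: the only step that deserves a line of explanation is the identification $V_{\delta_f} = V$, which is immediate from the defining property of the Dirac measure, and no argument beyond Theorem~\ref{General_thm} is required.
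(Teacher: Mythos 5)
Your proposal is correct and coincides with the paper's (implicit) argument: the corollary is stated there precisely as the specialization of Theorem~\ref{General_thm} to $\pi^\infty=\delta_f$, using that $(H1)$ reduces to Lipschitz continuity of $f$, that $J_{\delta_f,s,x_0}=J_{s,x_0}$ so $V_{\delta_f}=V$, and that the right-hand side of \eqref{Vgen_est} tends to zero as $W_1(\pi^N,\delta_f)\to 0$. No further comment is needed.
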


The previous Corollary provides a positive answer to the questions (A)-(B) posed in the introduction. More precisely, it tells us that, whenever it is possible to construct a sufficiently close (with respect to the Wasserstein distance) probability distribution $\pi$ to the real dynamics $f$, then the value function $V_{\pi}$ is a good approximation of the value function $V$.

\subsection{A case of study: finite support measures converging to $\delta_f$}\label{sec: finite}
Let us consider the case in which the distribution $\pi^N$ is a linear combination of a finite number of Dirac deltas defined on a family of equi-bounded and equi-Lipschitz continuous functions $X:=\left\{ f_1, \dots, f_M\right\}$:
\begin{equation}\label{discrete_pi} \pi^N := \sum_{i=1}^M \alpha_i^N \, \delta_{f_i}, \end{equation}
where $\alpha_i^N\geq 0$, $\sum_{i=1}^M \alpha_i^N=1$ for every $i=1,\ldots,M$ and $N\in \mathbb{N}$.
In this case, the cost functional \eqref{cost_B} can be written as
\[ J_{\pi^N,s,x0}[u] := \sum_{i=1}^M \alpha_i^N \left[ \int_s^T \ell(x^{f_i}(t),u(t)) \, dt + h(x^{f_i}(T)) \right] . \]
Without loss of generality, let us also assume that $f\equiv f_1$ is the real underlying dynamics. Then it follows from Theorem \ref{General_thm}  the next result:
\begin{corollary}\label{cor:finite} Let us consider a sequence of probability distributions $\pi^N$ defined as in \eqref{discrete_pi}. Assume that \mbox{$f_1:\mathbb{R}^n\times U\rightarrow \mathbb{R}^n$} is Lipschitz continuous and assume the assumption $(H2)$ on the functions $\ell$ and $h$.
	Then
	\begin{enumerate}
		\item $ ||V_{\pi^N}- V||_\infty \leq C(L_f, L_\ell, L_h, T) \ \mathbb{E}_{\pi^N}{\left[ \norm{f-g}_\infty \right]}$ $\forall \, N \in \mathbb{N}$,
		where $V$ is the value function of Problem A relative to the true dynamics $f_1$.
		\item If $\{ \pi^N \}_{N \in \mathbb{N}}$ is a sequence of probability distributions with the same support $\{f_1, \dots, f_M\}$ and $\pi^N$ is converging to $\delta_{f_1}$, then $V_{\pi^N}$ converges to $V$.
	\end{enumerate}
\end{corollary}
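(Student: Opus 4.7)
The plan is to deduce both parts from Theorem~\ref{General_thm} applied with the choice $\pi^\infty := \delta_{f_1}$. Before invoking it I would verify the hypotheses: $(H2)$ is assumed directly, and $(H1)$ is required only over $\mathrm{supp}(\pi^\infty) = \{f_1\}$, which holds by the postulated Lipschitz continuity of $f_1$ (with constant $L_f$). Notice that Step~1 in the proof of Theorem~\ref{General_thm} uses the Lipschitz property only of the reference dynamics in $\mathrm{supp}(\pi^\infty)$, so no further assumption on $f_2,\dots,f_M$ is strictly needed (though, by hypothesis, the whole family $\{f_i\}$ is in fact equi-Lipschitz).

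For part~(1), the main observation is that when one marginal of a coupling is a Dirac measure, the set $\Gamma(\pi^N,\delta_{f_1})$ collapses to the unique product coupling $\pi^N\otimes\delta_{f_1}$. Hence the Wasserstein distance simplifies explicitly to
\begin{equation*}
W_1(\pi^N,\delta_{f_1}) \;=\; \int_X \norm{g-f_1}_\infty \, d\pi^N(g) \;=\; \sum_{i=1}^M \alpha_i^N\,\norm{f_i-f_1}_\infty,
\end{equation*}
which is precisely $\mathbb{E}_{\pi^N}[\norm{f-g}_\infty]$ in the notation of the statement. Plugging this identity into \eqref{Vgen_est} and recognizing $V_{\pi^\infty}\equiv V$ when $\pi^\infty = \delta_{f_1}$ yields part~(1) at once.

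For part~(2), by part~(1) it suffices to prove $\mathbb{E}_{\pi^N}[\norm{f-g}_\infty]\to 0$. Since every $\pi^N$ is concentrated on the same finite set $\{f_1,\dots,f_M\}$, the convergence $\pi^N\to\delta_{f_1}$ (in the weak or, equivalently on the compact metric space $X$, Wasserstein sense) forces $\alpha_1^N\to 1$ and $\alpha_i^N\to 0$ for $i\neq 1$, as one sees by testing against continuous functions that separate the points of the finite support. Therefore
\begin{equation*}
\mathbb{E}_{\pi^N}[\norm{f-g}_\infty] \;=\; \sum_{i=2}^M \alpha_i^N\,\norm{f_1-f_i}_\infty \;\longrightarrow\; 0,
\end{equation*}
and part~(1) gives the uniform convergence $V_{\pi^N}\to V$ on $[0,T]\times\mathbb{R}^n$.

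I do not expect any genuine obstacle here: the whole content is the trivialization of $W_1$ against a Dirac mass, after which the rest is a direct substitution into Theorem~\ref{General_thm} followed by an elementary weight-convergence argument on the finite family.
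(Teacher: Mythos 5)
Your proposal is correct and follows the same route the paper intends: the corollary is obtained by instantiating Theorem~\ref{General_thm} with $\pi^\infty=\delta_{f_1}$ (so that $(H1)$ reduces to the Lipschitz continuity of $f_1$ alone), observing that the only coupling with a Dirac marginal is the product one, whence $W_1(\pi^N,\delta_{f_1})=\mathbb{E}_{\pi^N}[\norm{f_1-g}_\infty]$, and then letting the weights $\alpha_i^N$, $i\neq 1$, vanish for part~(2). No gaps.
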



\section{Numerical Tests}\label{sec: numerics}
In this section, we will present two numerical tests. Both tests deal with an extremely simplified situation: a parametric model, where the parameter can take only a finite number of values (cf. section \ref{sec: finite}). We remark that these tests are intended for \textit{illustrative purposes} only and their main goal is to verify that Theorem~\ref{General_thm} and Corollary~\ref{cor:conv} hold in a particular case. Indeed, the theory developed here includes far more general cases than this, including parametric models with a large number of parameters and infinite possible values for each parameter (e.g. deep neural networks \cite{DeepPILCO,PETS2018,MBPO2019}) or even non-parametric models (e.g. Gaussian processes \cite{PILCO2013,Kamthe2018}).

\subsection{Test 1}
We consider a dynamical system governed by the differential equation
\begin{equation*}
	\left\{ 
	\begin{aligned}
		\dot{x}(t) &= \lambda x(t) + sin(x(t)) + u(t) \qquad  t \in [s,T] \\
		x(s)&=x_0 , 
	\end{aligned}
	\right.
\end{equation*}
with $u(t) \in U = [-1,1]$ for $t \in [s,T]$.
The agent, who doesn't know the parameter $\lam$, has a probability distribution on a set of 5 possible values for $\lam$:
\[ \left\{ \lam_1=0, \ \lam_2=1, \ \lam_3=-1, \ \lam_4=0.5, \ \lam_5=-0.5  \right\} . \]
For each $N \in \mathbb{N}$ the probability distribution can be written, similarly to \eqref{discrete_pi}, as
\[ \pi^N = \sum_{i=1}^5 \alpha_i^N \delta_{\lambda_i} . \]
Note that a probability distribution on a set of parameters is equivalent to a probability distribution on a family of functions $\{f_i\}_i$ (see also Remark~\ref{rmk: parameters}), where
\[ f_i(x,u) = \lambda_i x + sin(x) + u . \]
We set $s \equiv 0$. The agent needs to minimize an averaged cost
\begin{equation}\label{test1: cost}
	\begin{aligned}
		J_{\pi^N,0,x_0}[u]  &= \sum_{i=1}^5 \alpha_i^N \left[ \int_0^T u(t)^2 dt - x_i(T) \right] \\
		&= \int_0^T u(t)^2 dt + \sum_{i=1}^5 \alpha_i^N \left[  - x_i(T) \right]
	\end{aligned}
\end{equation}
over the set of measurable controls $\mathcal{U}$. The Problem B associated to a fixed $\pi^N$ can thus be seen as an optimal control problem in dimension 5, where the state variable include the five trajectories $x_1, \dots, x_5$.

We solved this problem numerically when the probabilities $\alpha_i^N$ were defined according to the following rule:
\[ \alpha_1^N = 1-\frac{1}{2^ N} \, , \quad \alpha_i^N = \frac{1}{4} \frac{1}{2^N} \ \textrm{for } i=2,\dots,5 \ .  \]
It is clear that the sequence is converging to $\delta_{\lambda_1}$, which we assume to be the parameter $\lam$ of the \emph{true dynamics}, following the setting of subsection~\ref{sec: finite}. 
An example of optimal (multi-)trajectory of Problem B relative to $\pi^1$ is plotted in Fig.~\ref{fig:test1_traj}. In order to minimize the cost functional \eqref{test1: cost}, the agent tries to steer all the trajectories towards the positive values of the real axis, using a control close to $+1$; at the same time, the optimal control cannot be constantly $+1$, since the cost functional penalizes larger values of the control (Fig.~\ref{fig:test1_control}).

\begin{figure}[tbp]
	\centering
	\includegraphics[scale=0.43]{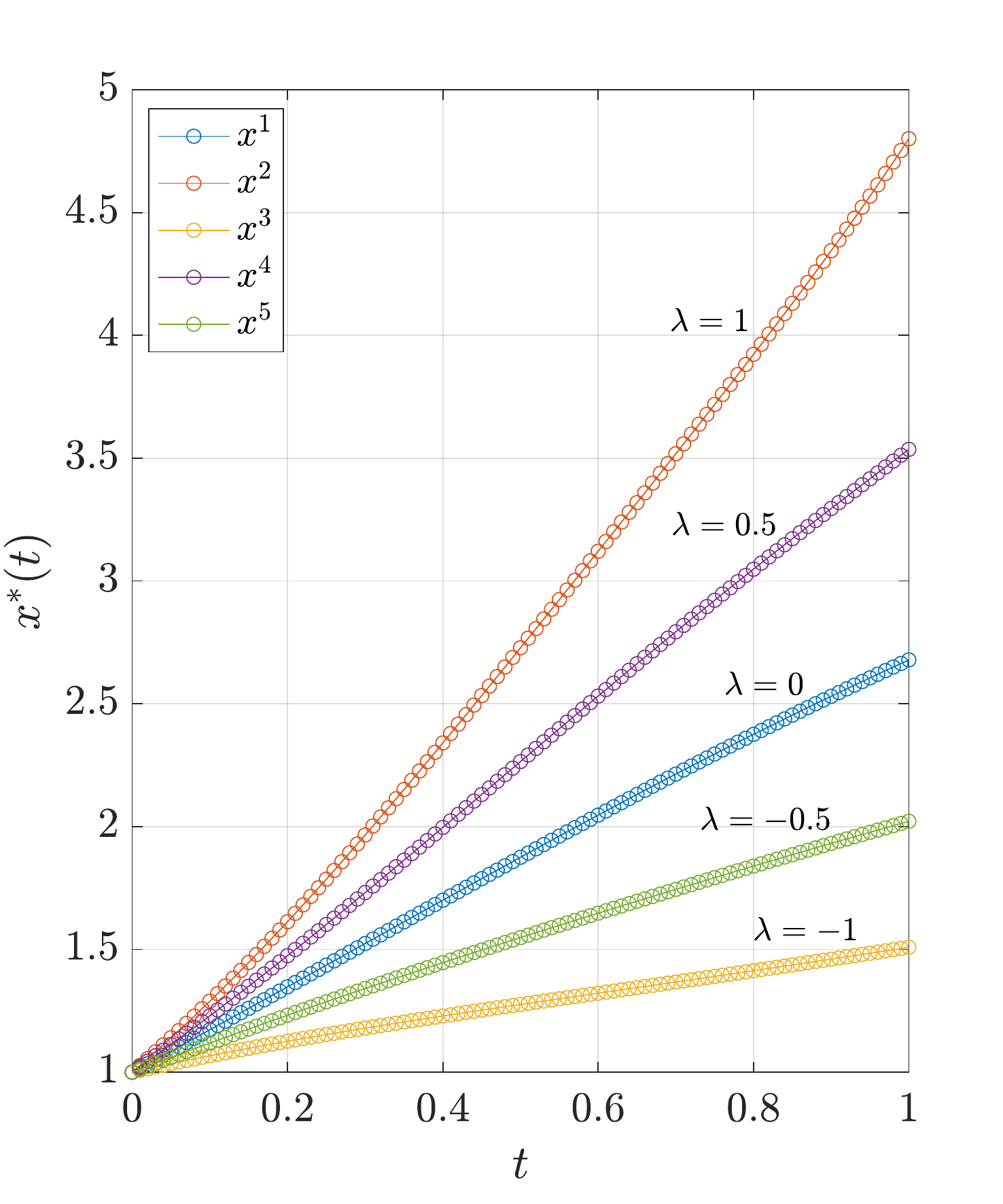}
	\caption{Test 1: Optimal (multi-)trajectory starting from $x_0 = 1$.}
	\label{fig:test1_traj}
\end{figure}

\begin{figure}[tbp]
	\centering
	\includegraphics[scale=0.43]{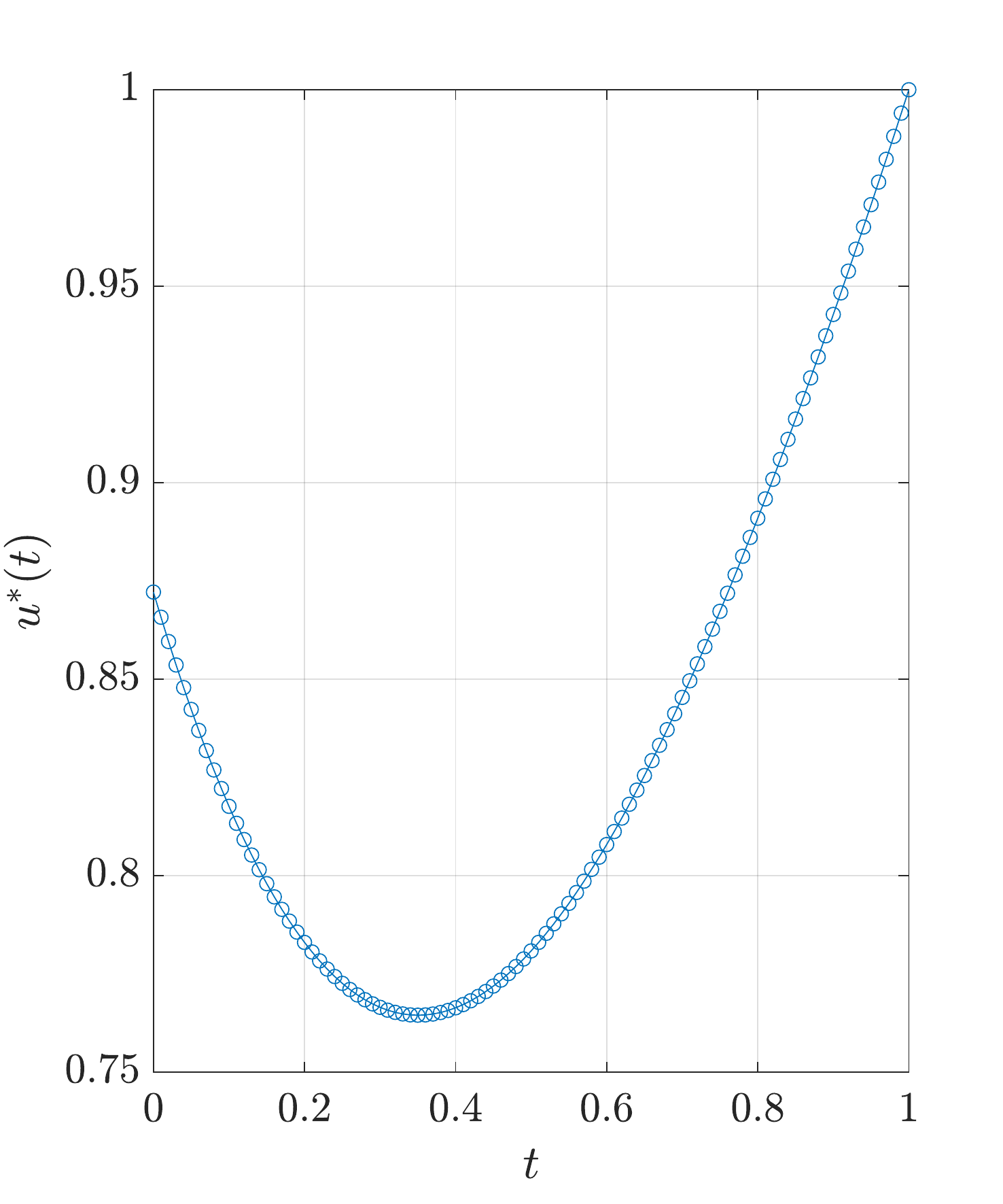}
	\caption{Test 1: Optimal control starting from $x_0 = 1$.}
	\label{fig:test1_control}
\end{figure}

For each $N=1, \dots, 8$ we computed the value function of Problem B solving the equations given by the Pontryagin Maximum Principle (see e.g. \cite{Fleming-Rishel}), for a grid of initial points $x_0 \in [-1,1]$. Then, we compared them to the value function of Problem A relative to the true dynamics $f_1$, computing the sup norm of the difference $V_{\pi^N}-V$ over the interval $[-1,1]$; the results are reported in Table~\ref{tab:test1}. Note that at each iteration the Wasserstein distance $W_1$ between $\pi^N$ and $\delta_{f_1}$ is halved and so is the error; this means that the numerical convergence order is 1, which agrees to the estimate given by Corollary~\ref{cor:finite}.

\begin{table}[tbhp]
	\centering
	\def\arraystretch{1.5}
	\begin{tabular}{|c|c|c|c|}
		\hline
		$N$  	 &	$\alpha_1^N$		& $||V_{\pi^N}-V||_{\infty, [-1,1]}$	& $order$ \\
		\hline
		1	 & 0.5  	  & 1.57e-1    	  & -	  		\\
		\hline
		2	 & 0.75  	 & 7.87e-2    	 & 1.00	     \\
		\hline
		3	 & 0.875  	& 3.94e-2    	& 1.00		\\
		\hline
		4	 & 0.9375  & 1.97e-2    	& 1.00		\\
		\hline
		5	 & 0.9687  & 9.84e-3    	& 1.00		\\
		\hline
		6	 & 0.9844  & 4.92e-3    	& 1.00		\\
		\hline
		7	 & 0.9922  & 2.46e-3    	& 1.00		\\
		\hline		
		8	 & 0.9961  & 1.23e-3    	& 1.00		\\
		\hline
	\end{tabular}
	\caption{Test 1: Errors for value functions related to $\pi^N$ for $N=1, \dots, 8$ with respect to the true value function of Problem A.}
	\label{tab:test1} 	
\end{table}

\subsection{Test 2}
The convergence results presented in this work hold under the assumption that the cost functions $\ell$ and $h$ are both globally Lipschitz continuous. With the following numerical test, we show that there are other examples of practical interest, where we can observe similar convergent behavior in the error $\norm{V_{\pi^N}-V}_\infty$, even though this hypothesis is not verified.

In this second example, the state of the system is \mbox{2-dimensional}. The three possible dynamics are all linear in the space variable and they differ only by the system matrix $A_i \in \mathbb{R}^{2 \times 2}$:
\[ 
\begin{pmatrix} \dot{x}_1^i \\ \dot{x}_2^i \end{pmatrix} 
= A_i \begin{pmatrix} x_1^i \\ x_2^i \end{pmatrix} 
+ \begin{pmatrix} cos(u) \\ sin(u) \end{pmatrix}  , \]
where $u$ is a 1-dimensional control which lies in $[0,2\pi]$.

The three possible matrices are
\[ A_1 = \begin{pmatrix} 1 & 0  \\ 0 & 1 \end{pmatrix}  , \
A_2 = \begin{pmatrix} 0.5 & 0 \\ 0 & 2 \end{pmatrix} \textrm{ and }
A_3 = \begin{pmatrix} 0.5 & -0.5 \\ 0.5 & 0.5 \end{pmatrix} . \]
Without loss of generality, we assume that the true dynamics corresponds to the first matrix. The agent only knows a probability distribution $\pi^N$ on the set of the three matrices, defined as in the previous example:
\[ \pi^N = \sum_{i=1}^3 \alpha_i^N \delta_{A_i} , \]
where the weights are defined according to the rule
\[ \alpha_1^N = 1-\frac{1}{2^ N} \, , \quad \alpha_i^N = \frac{1}{2^{N+1}} \ \textrm{ for } i=2,3  .  \]

We set again $s \equiv 0$. The functional cost to be minimized is
\begin{equation}
	J_{\pi^N,0,x_0}[u] = \frac{1}{2} \sum_{i=1}^3 \alpha_i^N \left[ \int_0^T \norm{x^i(t)}^2 dt + \norm{x^i(T)}^2 \right] ,
\end{equation}
where $\norm{\cdot}$ indicates the euclidean norm in $\mathbb{R}^2$.
In Fig.~\ref{fig:test3_traj} we can see an example of optimal \mbox{(multi-)}trajectory for this problem, when the distribution $\pi$ has been chosen to be $\left\{\frac{1}{3},\frac{1}{3},\frac{1}{3}\right\}$. In this case, the agent has to minimize the average squared distance of the trajectories from the origin, thus he looks for a single control to steer all three trajectories towards the origin at the same time. Clearly, we can see that none of the trajectories reaches exactly the origin, since a control which is optimal for one of the three dynamics may not be optimal for the other two. 
\begin{figure}[tbp]
	\centering
	\includegraphics[scale=0.44]{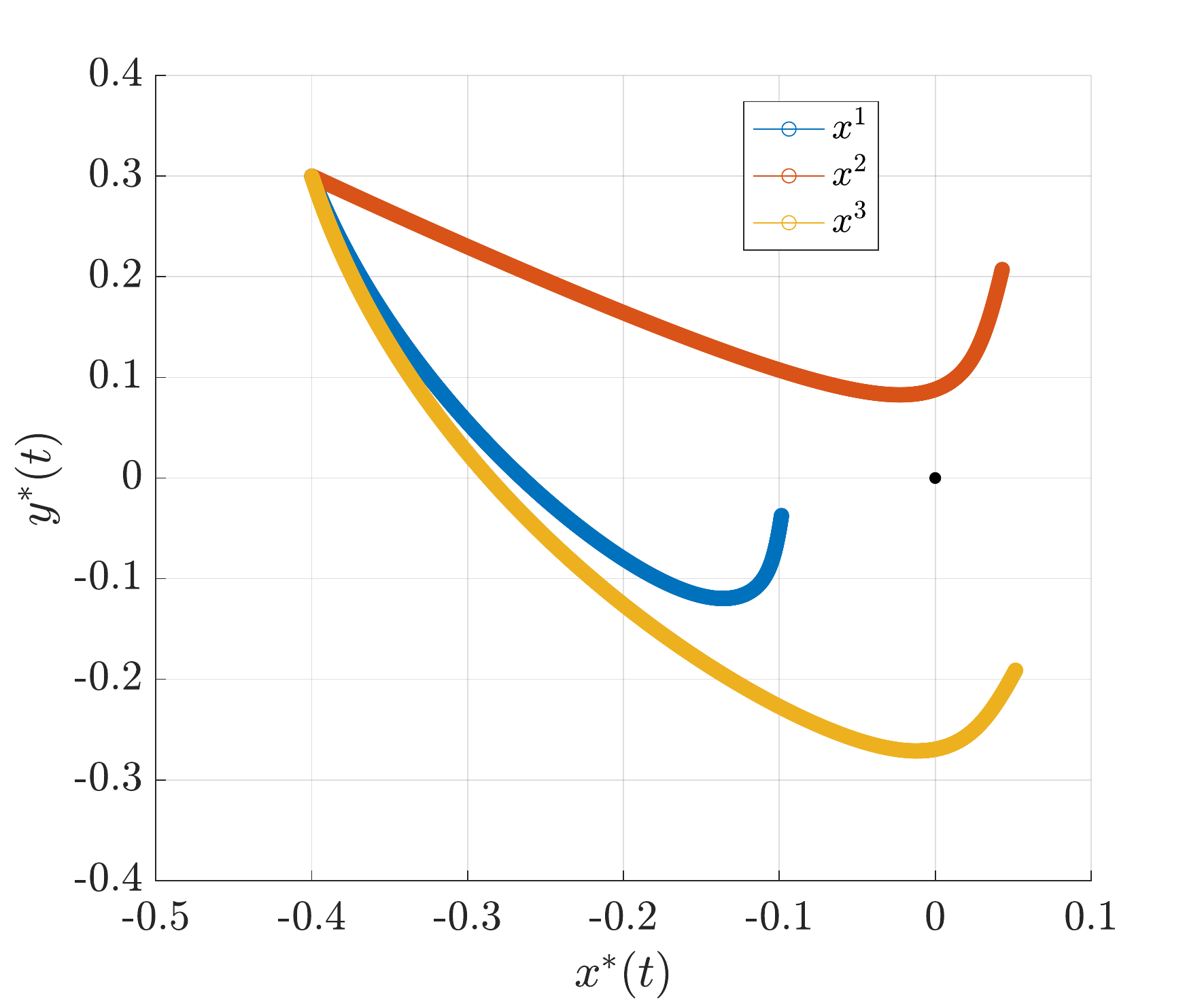}
	\caption{Test 2: Optimal (multi-)trajectory for $\pi=\left\{\frac{1}{3},\frac{1}{3},\frac{1}{3}\right\}$ starting from $x_0 = (-0.4,0.3)$.}
	\label{fig:test3_traj}
\end{figure}
In Table~\ref{tab:test3} the distance between the value function of Problem B and the true value function of Problem A is reported, for different distributions $\pi^N$. Even in this case, although the hypothesis of Corollary~\ref{cor:finite} are not satisfied, we observe a clear convergence with order 1.

\begin{table}[tbh]
	\centering
	\def\arraystretch{1.5}
	\begin{tabular}{|c|c|c|c|c|c|c|c|}
		\hline
		$N$  	 &	$\alpha_1^N$		& $||V_{\pi^N}-V||_{\infty, [-1,1]^2}$	& $order$ \\
		\hline
		1	 & 0.5  	  & 2.52e-0      	& -	  		\\
		\hline
		2	 & 0.75  	 & 1.32e-0       & 0.94	     \\
		\hline	
		3	 & 0.875  	& 6.81e-1    	& 0.95		\\
		\hline
		4	 & 0.9375  & 3.47e-1    	& 0.97		\\
		\hline
		5	 & 0.9687  & 1.75e-1    	& 0.98		\\
		\hline
		6	 & 0.9844  & 8.81e-2    	& 0.99		\\
		\hline
	\end{tabular}
	\caption{Test 2: Errors for value functions related to $\pi^N$ with $N=1, \dots, 6$ with respect to the true value function of Problem A.}
	\label{tab:test3}
\end{table}

\section{Conclusions}\label{sec: conclusions}

In this paper, we have shown some convergence properties of the value function for optimal control problems in an uncertain environment. In the framework of the paper, the degree of uncertainty of the control system is captured by a probability measure defined on a compact space of functions. Furthermore, such a probability measure is updated as soon as more information on the environment is gained. The paper framework is closely related to many model-based RL algorithms which aim at designing a suitable probability distribution rather than providing a pointwise estimate of the dynamics.

Similar results have been proved for the Linear Quadratic Regulator problem \cite{pesare2020LQR}. The main novelty of the paper consists in the assumptions on the dynamical system and the cost since we abandon the classical linear quadratic setting to deal with general nonlinear optimal control problems. We believe that the hypotheses of our theoretical result can be further relaxed. Some numerical examples seem to be a good omen for future investigations in this direction.
	
\bibliographystyle{IEEEtran}
\bibliography{ECC2021}

\end{document}